\newtheorem{theorem}{Theorem}[section]
\newtheorem*{theorem*}{Theorem}
\newtheorem{lemma}[theorem]{Lemma}
\newtheorem{corollary}[theorem]{Corollary}
\theoremstyle{definition}
\newtheorem{definition}[theorem]{Definition}
\theoremstyle{remark}
\newtheorem{remark}[theorem]{Remark}
\numberwithin{equation}{section}
\renewcommand{\epsilon}{\varepsilon}
\renewcommand{\phi}{\varphi}
\renewcommand{\kappa}{\varkappa}
\begin{document}

\title{Size of components of a cube coloring}
\author{Marsel~Matdinov}
\email{marmarselsel@mail.ru}
\address{Marsel~Matdinov, Faculty of Mathematics, Higher School of Economics, 7 Vavilova Str. Moscow, Russia 117312}

\subjclass[2010]{05C15,54F45}

\begin{abstract}
Suppose a $d$-dimensional lattice cube of size $n^d$ is colored in several colors so that no face of its triangulation (subdivision of the standard partition into $n^d$ small cubes) is colored in $m+2$ colors. Then one color is used at least $f(d, m) n^{d-m}$ times.
\end{abstract}

\maketitle

\section{Introduction}

A theorem attributed to Lebesgue asserts that if a lattice cube of dimension $d$ is colored in $d$ colors then one of the colors has a connected component spanning two opposite facets of the cube. By the standard reasoning with nerves of coverings this means that the covering dimension of the $d$-dimensional cube is at least $d$.

There arises the following natural question: What happens if the number of colors is less than $d$? In~\cite{matpri2008,kb2010} it was conjectured that the size of a monochromatic connected component has a lower bound of order $n^{d-m}$ when $m+1$ colors are used. For $m=d-1$ this follows from the Lebesgue theorem and for $m=1$ this conjecture is proved in~\cite{matpri2008}.

Here we prove this conjecture in a slightly stronger form:

\begin{theorem}
\label{color-cube}
Let a $d$-dimensional cube $Q$ be partitioned into $n^d$ small cubes in the standard way and then the $(m+1)$-dimensional skeleton $Q_m$ of this partition is subdivided to the triangulation $T$. Suppose the vertices of $T$ (equivalently, vertices of $Q_m$) are colored in several colors so that no $(m+1)$-face $\sigma\in T$ is colored in $m+2$ different colors. Then one of the colors is used on at least $f(d, m) n^{d-m}$ vertices of $T$.
\end{theorem}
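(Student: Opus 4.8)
The plan is to encode the colouring as a simplicial map and play off the connectivity of the relevant cube skeleton against an isoperimetric inequality. Let $\Delta$ be the abstract simplex on the set of colours and let $\phi\colon T\to\Delta$ send each vertex of $T$ to its colour. A short observation --- if some simplex of $T$ carries $\ge m+2$ colours, then deleting vertices one at a time while always keeping $m+2$ of the colours present produces an $(m+1)$-simplex with $m+2$ colours --- shows that the hypothesis is equivalent to $\phi(T)\subseteq\Delta^{(m)}$, the $m$-skeleton of $\Delta$. On the other hand $|T|=Q_m$ is the $(m+1)$-skeleton of the contractible cube $Q$, so $Q_m$ is $m$-connected, and its $1$-skeleton is the full grid graph $\{0,\dots,n\}^{d}$; hence $Q_m$ obeys the isoperimetric bound that any partition of its vertex set into two classes each of size $\ge\epsilon n^{d}$ has at least $c(\epsilon)\,n^{d-1}$ grid edges running between the classes.

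The argument then splits. If at most $C(d,m)\,n^{m}$ colours occur, pigeonhole already gives a colour used on $\ge(n+1)^{d}/(C n^{m})\ge f(d,m)\,n^{d-m}$ vertices and we are done; otherwise I would argue by contradiction, assuming every colour used fewer than $f\,n^{d-m}$ times, with $f$ small. For $m=1$ this is clean: here $\Delta^{(1)}$ is a graph $G$ and $Q_1$ is simply connected, so $\phi$ lifts to the universal cover $\tilde G$ (a tree), $\tilde\phi\colon Q_1\to\tilde G$. Weight each vertex of $\tilde G$ by the number of vertices of $T$ in its $\tilde\phi$-preimage (a piece of one monochromatic class), with total weight $W=(n+1)^{d}$. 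Either some vertex has weight $\ge\tfrac13 W$ --- a colour used $\gtrsim n^{d}$ times, done --- or, choosing a centroid $v$ and distributing the branches at $v$ greedily, one obtains a partition of the vertices of $\tilde G$ into two sides of weight $\ge\tfrac13 W$ whose cut consists only of edges incident to $v$. Pulling back, $\tilde\phi^{-1}$ of those mid-edges is a $1$-complex separating $Q_1$ into two parts with $\gtrsim n^{d}$ vertices each, and every grid edge running between the parts must be a $2$-coloured edge of $T$ one of whose colours is the colour covered by $v$; there are $\lesssim|T_{\,\mathrm{colour}(v)}|<O(f n^{d-1})$ of these. For $f$ small this contradicts the isoperimetric bound, so in fact some colour is used $\gtrsim n^{d-1}=n^{d-m}$ times.

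For $m\ge 2$ I would try to imitate this, the role of ``lift to the universal cover'' being played by the fact that, since $Q_m$ is $m$-connected, $\phi$ lifts to the $m$-connected cover of $\Delta^{(m)}$ (equivalently, under the inclusion $\Delta^{(m)}\hookrightarrow\Delta^{(m+1)}$ the map $\phi$ becomes null-homotopic --- both spaces being $m$-connected and $(m+1)$-dimensional --- so one may instead work with a chosen filling). One then wants, in this unfolded space, a small ``wall'' --- one whose $\hat\phi$-preimage meets only simplices of $T$ using a bounded number of colours, hence $\lesssim n^{d-m}$ simplices of $T$ --- that nonetheless separates $Q_m$ into two parts with $\gtrsim n^{d}$ vertices each. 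Such a wall immediately contradicts the isoperimetric bound (indeed for $m\ge 2$, since $n^{d-m}\ll n^{d-1}$, the contradiction no longer requires $f$ to be small), forcing some colour to be used $\gtrsim n^{d-m}$ times.

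I expect this last step to be the main obstacle, and genuinely harder than $m=1$. A graph is ``tree-like'' precisely because deleting the midpoint of an edge of its universal cover disconnects it; but for $m\ge 2$ the colour complex $\Delta^{(m)}$ has no small separators --- deleting a bounded set of colours never disconnects it --- so no unfolding turns a ``few-colour'' wall into a balanced one on its own. Making the scheme work therefore seems to require genuinely new input: either a truly $(m+1)$-dimensional isoperimetric/obstruction-theoretic argument that uses the full null-homotopy of $\phi$ in $\Delta^{(m+1)}$, or an induction on $d$ (or on $m$) carried out with a strengthened statement that records the connectivity of the large monochromatic component, so that the contributions of parallel slices can be combined rather than wasted; pinning down the correct such strengthening is, I think, where the real work lies.
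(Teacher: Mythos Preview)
Your $m=1$ argument is essentially correct and recovers the known Matou\v{s}ek--P\v{r}\'{\i}v\u{e}tiv\'{y} result: lifting the colouring map to the tree covering the colour graph and cutting at a centroid is exactly the right picture, and the vertex/edge isoperimetry of the grid finishes it.

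For $m\ge 2$, however, your proposal is not a proof but a programme, and you yourself diagnose why it stalls: the $m$-skeleton $\Delta^{(m)}$ of a simplex has no bounded-colour separators, so no covering-space unfolding will manufacture a balanced wall whose preimage is controlled by a single colour class. Your suggested fixes (an ``$(m+1)$-dimensional obstruction-theoretic argument'' or an unspecified strengthened induction hypothesis) are directions, not arguments; the heart of the theorem is precisely the case $m\ge 2$, and that case is left open.

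The paper's proof takes a route that sidesteps the separator obstacle entirely. Rather than lifting to a cover and cutting, it works linearly with cochains. For an ordered colour set $C$ it forms the ``Sperner'' cochain $\chi_C\in C^{|C|-1}(T)$ and observes the coboundary identity $\delta\chi_C=\sum_{c}\chi_{Cc}$; the hypothesis makes $\chi_C$ a cocycle when $|C|=m+1$. The argument then descends through a chain of nested complexes
\[
Q_m\supset Q_{m-1}\supset\cdots\supset Q_0,
\]
where $Q_k$ is the $(k+1)$-skeleton of some $(d-m+k)$-dimensional face of $Q$. At each step one has, for every $k$-face $\sigma$ of $Q_k$, a ``balancing'' $(k+1)$-chain $B(\sigma)\in C_{k+1}(T)$ with bounded total multiplicity, so that $A(\sigma)=L(\sigma)+\partial B(\sigma)$ satisfies $\chi_D(A(\partial\tau))=0$ for every $(k+1)$-face $\tau$ and every colour set $D$ of size $k+1$. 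Passing from $Q_k$ to $Q_{k-1}$ amounts, for each such $D$, to solving $\delta\eta=\xi_D$ on the cubical complex $Q_{k-1}$, where $\xi_D(\tau)=\chi_D(A(\tau))$; a filling (isoperimetric) inequality for cubical cochains gives $\eta$ with $|\eta|\le C\,|\xi_D|$, and one realises $\eta$ by assigning to $(k-1)$-faces only $k$-simplices of $T$ coloured exactly in $D$, which keeps the multiplicity bounded and keeps different $D$'s from interfering. At the bottom, $Q_0$ is the $1$-skeleton of a $(d-m)$-cube with $(n+1)^{d-m}$ vertices; each vertex $v$ carries a $0$-chain $A(v)$ of total coefficient $1$, and the balancing forces $\chi_c(A(v))$ to be independent of $v$ for every colour $c$. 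Some $c$ therefore has $\chi_c(A(v))\neq 0$ for all $v$, so $c$ appears in the support of every $A(v)$; bounded multiplicity then yields $\gtrsim n^{d-m}$ vertices coloured $c$.

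The moral is that the right generalisation of ``lift to a tree and cut at a centroid'' is not topological but homological: replace the single cut by a system of cochain equations solved one colour-tuple at a time with controlled norms. That is the missing idea in your proposal.
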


\begin{remark}
It is also sufficient to assume that every cubical face of $Q_m$ of dimension $m+1$ has at most $m+1$ colors on its vertices. Such a point of view allows not to use any triangulation $T$ in the statement of the theorem.
\end{remark}

\begin{remark}
\label{color-comp}
If a color $c$ has several connected components then every component can be assumed to be a separate color. So we obtain a monochromatic connected set of at least $f(d,m) n^{d-m}$ vertices of $T$. By \emph{adjacent} vertices we mean two vertices in a single $(m+1)$-face of $T$. This remark also remains valid if we define adjacent vertices as contained in a single cubical $(m+1)$-face of $Q_m$.
\end{remark}

\begin{remark}
We do not establish any explicit values for $f(d,m)$. The reader may consult the paper~\cite{kar2011} for this information.
\end{remark}

A similar theorem was proved independently in~\cite{kar2011}. In Section~\ref{corollaries-sec} we give some corollaries of Theorem~\ref{color-cube} about coverings of a cube or a torus.

\medskip
\textbf{Acknowledgments. }
The author thanks Alexey~Kanel-Belov for drawing attention to this problem and numerous discussions; and thanks Roman~Karasev for his help in writing the text and translating it to English.

\section{Proof of Theorem~\ref{color-cube}}

Let us introduce some notation. Let $C = \{c_0, \ldots, c_k\}$ be an ordered set of $k+1$ colors. For every oriented $k$-face of $T$ we assign $+1$ is its vertices are colored in the colors of $C$ in accordance with the orientation, $-1$ if the vertices are colored in the colors of $C$ with opposite orientation, and $0$ if the face is colored in other colors or some of the colors is used more than once. Thus we define a cochain $\chi(C) \in C^k(T)$ and observe the coboundary formula:
\begin{equation}
\label{coboundary-eq}
\delta \chi(C) = \sum_{c_{k+1}} \chi_{Cc_{k+1}},
\end{equation}
where we sum over all the colors and by $Cc_{k+1}$ we mean the concatenation of $C$ and the new color $c_{k+1}$. If $c_{k+1}$ coincides with a color in $C$ then $\chi_{Cc_{k+1}}$ is assumed to be zero.

In the rest of the proof we consider $(k+1)$-dimensional cubical subcomplexes $Q_k\subseteq Q_m$, which are $(k+1)$-dimensional cubical skeleta of some $(d-m+k)$-dimensional faces (big faces, not faces of a partition) of the cube $Q$, for $k=m,m-1,\ldots, 0$. 

In order to apply simplicial cochains to cubical chains we introduce the function $L \colon C_k(Q_m) \to C_k(T)$ that assigns to any $k$-face  $\tau\in Q_m$ the sum of simplicial $k$-faces of $T$ that partition $\tau$ with appropriate orientations. Obviously $L$ commutes with the boundary map $\partial$.

We are going to \emph{balance} the complexes $Q_k$ as follows:

\begin{definition}
For every $k$-face $\sigma\in Q_k$ we will assign a $(k+1)$-chain $B(\sigma)\in C_{k+1}(T)$ so that for any $(k+1)$-face $\tau\in Q_k$ and a set $C$ of $k+1$ colors the following holds:
\begin{equation}
\label{balancing-eq}
\chi_C (L(\partial \tau) + \partial B(\partial \tau)) = 0,
\end{equation}
where we assume that $B$ is linearly extended to $k$-chains of $Q_k$.
\end{definition}

We also put $A(\sigma) = L(\sigma) + \partial B(\sigma)$ and use (\ref{balancing-eq}) in the following form:
\begin{equation}
\label{balancing-eq-2}
\chi_C (A(\partial \tau)) = 0.
\end{equation}

Let us check that the $(m+1)$-skeleton $Q_m$ of $Q$ is already balanced (so that we may put $B(\cdot)=0$). Let $|C|=m+1$, then 
$$
\chi_C(L(\partial \tau)) = (\delta \chi_C, L(\tau)) = 0,
$$ 
since $\delta \chi_C=0$ by the formula (\ref{coboundary-eq}) and the assumption of the theorem (no face of $T$ is colored in $m+2$ different colors).

The plan of the remaining part of the proof is following:

\begin{itemize}
\item 
Denote the $k$-skeleton of any facet of the cube corresponding to $Q_k$ by $Q_{k-1}$;
\item 
Balance $Q_{k-1}$ by defining a suitable $B : C_{k-1}(Q_{k-1})\to C_k(T)$;
\item 
Make sure that in all the expressions $B(\sigma)$ for all $(k-1)$-faces $\sigma\in Q_{k-1}$ every $k$-face $\beta\in T$ is used at most $C(d, k-1)$ times (counted with its multiplicity in the chains $B(\sigma)$).
\end{itemize}

If this plan passes then on the last stage we have a $1$-dimensional skeleton of a $(d-m)$-dimensional cube (containing $n^{d-m}$ small cubes) $Q_0$. To every vertex $v\in Q_0$ we assign a chain of $1$-faces of $T$ denoted by $B(v)$. Then $0$-chains $A(v)$ are simply sets of vertices of $T$ with integer multiplicities such that the sum of coefficients in every $A(v)$ is $1$. For any color $c$ by (\ref{balancing-eq-2}) we obtain that $\chi_c (A(v_1)) = \chi_c(A(v_2))$ for any pair of adjacent vertices $v_1$ and $v_2$. Since $Q_0$ is a connected graph we obtain that the number $\chi_c(A(v)) = x_c$ does not depend on $v$. The sum over all colors is 
$$
\sum \chi_c(A(v)) = (1, A(v)) = 1,
$$ 
so there exists a color $c$ with nonzero $x_c$. Hence this color is used in every support of the $0$-cycle $A(v)$ for every $v$. We have at least $n^{d-m}$ different choices of $v$ and any point colored in $c$ is counted at most $C' C(d, 0)$ times (here $C'$ is the maximal number of $1$-faces incident to a vertex in $T$).

So it remains to pass from the balancing of $Q_k$ to the balancing of $Q_{k-1}$. Note that for every $k$-face $\tau\in Q_{k-1}$ we have to satisfy the equality (since $\partial^2 = 0$ and $\partial A(\tau) = \partial L(\tau) = L(\partial \tau)$):
\begin{equation}
\label{balancing-eq-3}
\chi_C (\partial A(\tau) + \partial B(\partial \tau)) = \chi_C (L(\partial \tau) + \partial B(\partial \tau)) = 0.
\end{equation}
In this formula $A(\tau)$ is already defined, and $B$ is to be defined on  $(k-1)$-faces of $Q_{k-1}$. The equality (\ref{balancing-eq-3}) follows from the equality:
\begin{equation}
\label{balancing-eq-4}
\chi_D (A(\tau) + B(\partial \tau)) = 0
\end{equation}
for every $k$-face $\tau\in Q_{k-1}$ and every set $D$ of $k+1$ colors. Indeed, using (\ref{coboundary-eq}) from (\ref{balancing-eq-4}) we obtain:
\begin{multline}
\chi_C (\partial A(\tau) + \partial B(\partial \tau)) = (\delta \chi_C, A(\tau) + B(\partial \tau) ) = \\
= (\sum_{c_k} \chi_{Cc_k}, A(\tau) + B(\partial \tau)) = 0.
\end{multline}

Now we fix a set $D$ of $k+1$ colors. Define by 
\begin{equation}
\xi_D(\tau) = \chi_D(A(\tau))
\end{equation}
a $k$-cocycle on $Q_k$ since for every $(k+1)$-face $\rho\in Q_k$ we have:
\begin{equation}
\xi_D(\partial \rho) = \chi_D(L(\partial \rho) + \partial B \partial \rho) = 0
\end{equation}
because $Q_k$ is balanced. To make this cocycle zero (as required in (\ref{balancing-eq-4})) we have to assign to some $(k-1)$-faces $\sigma\in Q_{k-1}$ as $B(\sigma)$ some sets (with coefficients) of $k$-faces $\tau\in T$. Obviously, it suffices to use only those $k$-faces $\tau\in T$ that are colored exactly in the colors of $D$.

The map $\sigma\mapsto \xi_D(B(\sigma))$ is going to be a $(k-1)$-dimensional cochain in $C^{k-1}(Q_{k-1})$ with coboundary $\xi_D|_{Q_{k-1}}$. In order to use any $k$-face (out of those colored in $D$) at most $C(d, k-1)$ times we have to check that the ratio between the norm (sum of absolute values) of some $(k-1)$-cochain $\eta\in C^{k-1}(Q_{k-1})$ such that $\delta \eta = \xi_D$ on $Q_{k-1}$ and the number of $k$-faces usable in $B(\sigma)$ (that is, colored in $D$) is bounded by a constant $C(d, k-1)$.

Let the norm of $\xi_D$ as an element of $C^k(Q_k)$ equal $M$. By the assumption that in every $B(\tau)$ a $(k+1)$-face of $T$ is used at most $C(d, k)$ times we conclude that every $k$-face of $T$ is used in all $A(\tau)$ at most $C(d,k)C'(k)$ times, where $C'(k)$ is the maximal number of $(k+1)$-faces containing a given $k$-face of $T$ (it can be bounded independently on the choice of a particular triangulation $T$). By the formula $\xi_D(\tau) = \chi_D(A(\tau))$ we conclude that among the $k$-faces of $T$ there do exist at least $\frac{M}{C(d,k)C'(k)}$ ``candidates'' for $B(\sigma)$. Now it suffices to solve the equation $\delta \eta = \xi_D$ on cochains on $Q_{k-1}$ so that the norm $|\eta|$ is at most $C''(d, k-1)M$. After that we can assign to cubical faces of $Q_{k-1}$ on which $\eta$ is nonzero several faces of $T$ on which $\xi_D$ is nonzero.

Note that $|\xi_D|=M$ and for some codimension $1$ cubical section $Q'$ of $Q_k$ (parallel to $Q_{k-1}$) we have: $|\xi_D|_{Q'}| \le M/n$. Then we use the ``filling inequality'' (see for example~\cite{grom2010}, where filling inequalities are widely used):

\begin{lemma}
\label{isoperimetry}
For a $k$-dimensional cocycle $\alpha$ on the cubical partition of the $d'$-dimensional cube $Q'$ (in terms of this proof) there exists a $(k-1)$-dimensional cubical cochain $\beta$ such that $\delta \beta = \alpha$ and $|\beta| \le C_F(d', k) n |\alpha|$.
\end{lemma}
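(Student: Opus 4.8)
The plan is to prove Lemma~\ref{isoperimetry} by induction on the dimension $d'$, with $k\ge 1$ kept fixed; for $d'<k$ there are no $k$-faces and nothing to prove, and $d'=k$ will serve as the base of the induction.

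For the inductive step I would use the product structure $Q'=Q''\times[0,n]$, where $Q''=[0,n]^{d'-1}$ is a facet, and split the faces of $Q'$ into \emph{horizontal} ones $\sigma\times\{j\}$ (with $\sigma$ a face of $Q''$, $0\le j\le n$) and \emph{vertical} ones $\rho\times[j,j+1]$ (with $\rho$ a face of $Q''$, $0\le j<n$). Accordingly $\alpha$ is recorded by the slice cochains $\alpha_j\in C^k(Q'')$, $\alpha_j(\sigma)=\alpha(\sigma\times\{j\})$, and the layer cochains $\gamma_j\in C^{k-1}(Q'')$, $\gamma_j(\rho)=\alpha(\rho\times[j,j+1])$. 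Testing $\delta\alpha=0$ on horizontal $(k+1)$-faces shows that each $\alpha_j$ is a $k$-cocycle on $Q''$, and testing it on vertical $(k+1)$-faces shows $\alpha_{j+1}-\alpha_j=\pm\,\delta''\gamma_j$ (with $\delta''$ the coboundary of $Q''$); in particular the $\alpha_j$ are all cohomologous on $Q''$.

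The crucial step is to choose \emph{which} slice to fill first. Since horizontal $k$-faces in distinct slices are disjoint, $\sum_j|\alpha_j|\le|\alpha|$, so I can pick a slice $j^*$ with $|\alpha_{j^*}|\le|\alpha|/(n+1)$; filling it by the inductive hypothesis gives $\alpha_{j^*}=\delta''\beta_{j^*}$ with $|\beta_{j^*}|\le C_F(d'-1,k)\,n\,|\alpha_{j^*}|\le C_F(d'-1,k)\,|\alpha|$ --- the extra factor $n$ is absorbed by the choice of $j^*$. I then propagate to the remaining slices by telescoping in the vertical direction, $\beta_{j+1}-\beta_j=\pm\gamma_j$ with the sign chosen so that $\delta''(\beta_{j+1}-\beta_j)=\alpha_{j+1}-\alpha_j$; this yields $\delta''\beta_j=\alpha_j$ for all $j$ and, since the $\gamma_i$ live on disjoint vertical faces (so $\sum_i|\gamma_i|\le|\alpha|$), the bound $|\beta_j|\le|\beta_{j^*}|+\sum_i|\gamma_i|\le(C_F(d'-1,k)+1)\,|\alpha|$. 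Finally I set $\beta(\sigma\times\{j\})=\beta_j(\sigma)$ and let $\beta$ vanish on all vertical faces; a routine check of signs confirms $\delta\beta=\alpha$ (on a horizontal $k$-face $\delta\beta(\sigma\times\{j\})=(\delta''\beta_j)(\sigma)=\alpha_j(\sigma)$; on a vertical $k$-face $\delta\beta(\rho\times[j,j+1])=\pm(\beta_{j+1}-\beta_j)(\rho)=\pm\gamma_j(\rho)$, the signs matching by construction). Summing over the $n+1$ slices, $|\beta|=\sum_j|\beta_j|\le(n+1)(C_F(d'-1,k)+1)\,|\alpha|$, which has the required form with $C_F(d',k)=2(C_F(d'-1,k)+1)$.

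In the base case $d'=k$ the facet $Q''=[0,n]^{k-1}$ carries no $k$-faces, so every $\alpha_j$ vanishes and $\beta$ is simply the discrete antiderivative in the last coordinate of the cochains $\gamma_j$, giving $|\beta|\le(n+1)|\alpha|$ and $C_F(k,k)=2$. I expect the norm bookkeeping to be the only real obstacle: filling the slices one at a time from an arbitrary starting slice would cost $n\cdot n\,|\alpha|$, and the point is precisely that an $\ell^1$-extremal slice can be filled at cost only $O(|\alpha|)$ while the telescoping corrections add at most $|\alpha|$ in total to any single slice. The remaining ingredients --- the sign check, and the facts that restriction to a subcomplex and the horizontal/vertical splitting do not increase the $\ell^1$-norm --- are immediate.
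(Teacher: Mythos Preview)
Your argument is correct and is, at its core, the same proof as the paper's: both pick a light hyperplane slice by pigeonhole, ``integrate'' the cocycle in the perpendicular direction (your telescoping $\beta_{j+1}-\beta_j=\pm\gamma_j$ is exactly the paper's $\beta_i(\tau)=\alpha_i([\tau,\pi_Z(\tau)])$), and then deal with what remains on the light slice. The only organizational difference is that the paper iterates this step over $d'-k+1$ coordinate directions, while you package the same iteration as an induction on $d'$; as a side effect, your cruder slice-by-slice norm bound $|\beta|\le(n+1)(C_F(d'-1,k)+1)|\alpha|$ yields an exponential constant $C_F(d',k)$, whereas the paper's bookkeeping gives the linear bound $C_F(d',k)\le d'-k+1$. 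Since the lemma only asks for \emph{some} constant depending on $(d',k)$, this is immaterial here, and a slightly sharper count of how many slices each $\gamma_i$ contributes to would recover a linear constant in your framework as well.
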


By this lemma we select a $(k-1)$-dimensional cochain $\beta$ on $Q'$ with norm at most $M C_F(d-m+k, k)$ with coboundary $\xi_D|_{Q'}$. Denote the part of $\xi_D$ between $Q'$ and $Q_{k-1}$ by $\xi'_D$; this is a cochain with norm at most $M$. As the required $(k-1)$-dimensional cochain $\eta$ on $Q_{k-1}$ we may take:
\begin{equation}
\eta = \beta + \pi_*(\xi'_D)
\end{equation}
with norm at most $(C_F(d-m+k, k) + 1)M$. Here $\beta$ is moved from $Q'$ to $Q_{k-1}$ by the translation and by $\pi_*(\xi'_D)$ we mean the \emph{direct image} under the projection onto $Q_{k-1}$ that drops the dimension by $1$. The cochain $\pi_*(\xi'_D)$ can be defined explicitly (thanks to the cubical complexes that we use) as taking any $(k-1)$-face $\sigma\in Q_{k-1}$ to the sum of values (with appropriate signs) of $\xi'_D$ on $k$-faces of $Q_k$ that project onto $\sigma$. In other words:
$$
\pi_*(\xi'_D)(\tau) = \xi'_D(\pi^{-1}(\tau)).
$$

So we satisfy the equality (\ref{balancing-eq-4}) for a particular color set $D$. Now we can add the chains $B(\sigma)$ corresponding to different sets $D$. The expressions $B(\sigma)$ for every particular $D$ contained exclusively $k$-faces colored in the colors in $D$. Hence for different $D$ we use different faces, which guarantees the bounded multiplicity of the union of all $B(\sigma)$. The faces of $B(\sigma)$ colored in $D$ do not affect the equality (\ref{balancing-eq-4}) for another set $D'$ (not obtained from $D$ by a permutation). Now to complete the proof it remains to prove the lemma.

\begin{proof}[Proof of Lemma~\ref{isoperimetry}] 
The proof is similar to the proof of~\cite[Lemma~2.6]{kar2011}, which is stated in terms of cycles Poincar\'e dual to the cocyles in this proof.

Put $\alpha_0 = \alpha$. We are going to build a cocycle $\alpha_{i+1}$ out of $\alpha_i$ as follows. Take a hyperplane section $Z$ of $Q'$ parallel to a pair of its opposite facets so that $|\alpha_i|_Z| \le |\alpha_i|/n$. This is possible by the Dirichlet principle.

Define a $(k-1)$-cochain $\beta_i$ as follows: 
$$
\beta_i(\tau) = \alpha_i([\tau, \pi_Z(\tau)]),
$$
where $[\tau, \pi_Z(\tau)]$ is (at most $k$)-dimensional parallelepiped between $\tau$ and its projection onto $Z$ with appropriate sign.
 
Now we put 
$$
\alpha_{i+1} = \alpha_i - \delta\beta_i
$$ 
and note that $|\beta_i|$ is at most $n|\alpha_i|$. Note also that $\alpha_{i+1}$ takes the same values as the translation of $\alpha_i|_Z$ on sections parallel to $Z$ and is zero on any face orthogonal to $Z$.

After several such operations for different directions of $Z$ ($d'-k+1$ will be enough) the cocycle $\alpha_{i+1}$ becomes zero. We have the inequality:
$$
|\alpha_{i+1}| \le |\alpha_i| \le\dots \le |\alpha|.
$$
If we take $\beta =\sum_i \beta_i$ then the required inequality holds with constant $C_F(d',k) \le d'-k+1$.
\end{proof}

\section{Some corollaries}
\label{corollaries-sec}
We give some topological corollaries of Theorem~\ref{color-cube}:

\begin{corollary}\footnote{The statement of this corollary is suggested by R.~Karasev as a simpler version of Corollary~\ref{1-hom-rank}.}
\label{touch-opposite}
Let a $d$-dimensional cube $Q$ be covered by closed sets $C_i$ so that no point is covered more than $m+1$ times. Then one of the sets $C_i$ intersects at least $d-m$ pairs of opposite facets of $Q$.
\end{corollary}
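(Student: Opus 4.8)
The plan is to reduce to the combinatorial situation of Theorem~\ref{color-cube} by a compactness argument, and then to upgrade the resulting large monochromatic component into a statement about opposite facets.

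\emph{Discretization.} Subdivide $Q$ into $n^d$ small cubes in the standard way and colour each vertex $v$ of the grid by the index of some $C_i$ containing it. If for arbitrarily large $n$ some cubical $(m+1)$-face of the partition were coloured in $m+2$ distinct colours, then, there being only finitely many $(m+2)$-element sets of indices, one such set $\{i_0,\dots,i_{m+1}\}$ would occur for infinitely many $n$ on faces of diameter $\le\sqrt d\,/n$; a limit point of these shrinking faces would lie in $C_{i_0}\cap\cdots\cap C_{i_{m+1}}$ (the $C_i$ being closed), contradicting that no point is covered more than $m+1$ times. Hence for all large $n$ the hypothesis of Theorem~\ref{color-cube} is satisfied (in the cubical formulation of the first Remark no triangulation is needed), and by that theorem together with Remark~\ref{color-comp} there is a colour, corresponding to some set $C_{i}$, with a monochromatic connected component of at least $f(d,m)n^{d-m}$ vertices; passing to a subsequence we fix this $i$.

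\emph{From size to spanning.} This is the heart of the matter, and the mere size bound is not enough, since a large connected subcomplex of the grid need not span even one pair of opposite facets. I would instead use the \emph{proof} of Theorem~\ref{color-cube}, not just its statement. That proof descends through cubical skeleta of ``big'' faces of decreasing dimension and finishes on the $1$-skeleton of some $(d-m)$-dimensional coordinate face $\Phi$ of $Q$, producing one colour $c$ with $\chi_c(A(v))=x_c\neq 0$ for every vertex $v$ of the grid on $\Phi$. Since the $2(d-m)$ facets of $\Phi$ are precisely the intersections of $\Phi$ with $d-m$ pairs of opposite facets of $Q$, it would suffice to show that this forces the set $C_c$ to come within $O(1/n)$ of each of those $2(d-m)$ facets, and then to let $n\to\infty$ along a subsequence on which $\Phi$ and $c$ are fixed and invoke closedness of $C_c$. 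The obstacle is exactly this localization: the correction chains $B(\cdot)$ coming from the filling Lemma~\ref{isoperimetry} are not supported near the faces they are attached to, so one must either refine Lemma~\ref{isoperimetry} to keep fillings boundary-controlled, or replace the ``soft'' component counting at the end of the proof of Theorem~\ref{color-cube} by a version sensitive to the boundary of $\Phi$.

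\emph{An alternative route.} One may instead try to prove the corollary directly, as a generalization of the classical Lebesgue covering theorem, bypassing the discretization: swell $\{C_i\}$ to an open cover of the same multiplicity $m+1$ with each member still avoiding all the facets its original avoids, take a subordinate partition of unity, and run a Poincar\'e--Miranda/degree argument for a coordinate projection $Q\to[0,1]^{d-m}$. Here the difficulty is complementary: each $C_i$ avoids a facet in at least $m+1$ directions, but these directions differ from set to set, and one must single out a consistent system of $d-m$ directions along which essentiality can be detected for one and the same $C_i$ --- and it is precisely this selection that should consume the multiplicity hypothesis, so I expect the argument to need the full strength of Theorem~\ref{color-cube} (or of its proof) rather than a soft topological argument.
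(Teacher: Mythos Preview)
Your discretization step is fine and matches the paper. The gap is in the ``from size to spanning'' step, and you have correctly identified that the raw size bound of $f(d,m)n^{d-m}$ for a single fixed $n$ is not by itself enough. But neither of your two proposed routes is what the paper does, and both of them, as you yourself note, run into real obstacles (non-local fillings in the first, no canonical choice of $d-m$ directions in the second).

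The paper's idea is much simpler and uses only the \emph{statement} of Theorem~\ref{color-cube}, not its internals. After colouring the grid on $Q$, reflect across the facets to colour a doubled cube $2Q$ of side $2n$, and then extend $2n$-periodically to all of $\mathbb Z^d$. Now look at a single connected component $c'_i$ of colour $i$ in $\mathbb Z^d$. If the original component $c_i\subset Q$ touches both facets orthogonal to $e_j$, the reflected/periodic copies glue up along those facets, so $c'_i$ is invariant under translation by $2n e_j$; if $c_i$ misses one of them, then $c'_i$ is trapped between two hyperplanes orthogonal to $e_j$ at distance $2n$. Hence the lattice $\Lambda_i$ of translational symmetries of $c'_i$ has rank exactly $\ell_i$, the number of pairs of opposite facets of $Q$ that $c_i$ meets, and $|c'_i\cap Q'|$ grows like $N^{\ell_i}$ when $Q'$ is a cube of side $2nN$. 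Applying Theorem~\ref{color-cube} (with Remark~\ref{color-comp}) to $Q'$ gives a component of size $\gtrsim N^{d-m}$; comparing exponents forces some $\ell_i\ge d-m$.

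So the missing trick is: reflect, periodize, and let an \emph{auxiliary} parameter $N\to\infty$ while $n$ stays fixed. This converts ``touches $\ell$ pairs of facets'' into ``grows like $N^\ell$'', after which the size bound from Theorem~\ref{color-cube} does all the work, with no need to localize the balancing chains or to run a separate degree argument.
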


\begin{remark}
This is a generalization of the Lebesgue theorem.
\end{remark}

\begin{proof}
We pass in a standard way from the covering to coloring the vertices of the partition of $Q$ into $n^d$ cubes. If the partition is fine enough then no partition face has $m+2$ distinct colors. 

Let us include $Q$ into the cube $2Q$ of size $(2n)^d$ and repeat the coloring of $Q$ using reflections with respect to the halving hyperplanes of $2Q$. Then we extend the coloring onto the whole $\mathbb Z^d$ with translations by $\pm 2n$ along the coordinate axes. Let us see what happens with a color $c_i$. Following Remark~\ref{color-comp} we assume that the color $c_i$ makes a connected subset of $Q$. The vertices of $\mathbb Z^d$ colored in $c_i$ can be decomposed into connected components; denote one of them by $c'_i$. If the component $c_i$ spans a pair of opposite facets (orthogonal to a base vector $e_j$) in $Q$ then $c'_i$ is invariant under the translation by $\pm 2ne_j$. Otherwise $c_i$ does not touch one of the facets orthogonal to $e_j$ and $c'_i$ is trapped between a pair of hyperplanes orthogonal to $e_j$ at distance $2n$ from each other. 

So the free Abelian group $\Lambda_i$ of translational symmetries of $c'_i$ has dimension exactly $\ell$, where $\ell$ is the number of pairs of opposite facets of $Q$ intersected by $c_i$ and $c'_i$ can be obtained from $2Q\cap c'_i$ by translations in $\Lambda_i$. If we intersect $c'_i$ with a large cube $Q'$ of size $(2nN)^d$ then the cardinality of $c'_i\cap Q'$ has the growth order $N^\ell$ for varying $N$. By Theorem~\ref{color-cube} and Remark~\ref{color-comp} some $c'_i\cap Q'$ must have the number of vertices of order at least $N^{d-m}$; so for some of $c'_i$ we must have $\ell\ge d-m$.
\end{proof}

\begin{corollary}
\label{1-hom-rank}
Let a $d$-dimensional torus $T^d$ be covered by open sets $C_i$ so that no point is covered more than $m+1$ times. Then for some $C_i$ the image of $H_1(C_i)$ in $H_1(T^d)=\mathbb Z^d$ has dimension at least $d-m$.
\end{corollary}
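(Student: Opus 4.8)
The plan is to mimic, almost verbatim, the proof of Corollary~\ref{touch-opposite}, with the ``cube doubled by reflections'' replaced by the universal cover of the torus. First I would pass from the open cover $\{C_i\}$ to a coloring. By compactness $\{C_i\}$ has a finite subcover, and by the usual shrinking lemma a finite \emph{closed} cover $\{F_i\}$ with $F_i\subseteq C_i$ and still multiplicity at most $m+1$. Write $T^d=\mathbb R^d/\Lambda$ and fix a standard cubical partition of $T^d$ so fine that $\Lambda$ is spanned by edges of the partition, together with a triangulation $T$ of its $(m+1)$-skeleton. Color each vertex $v$ by some index $i$ with $v\in F_i$. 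A standard compactness argument shows that once the partition is fine enough, no $(m+1)$-face of $T$ carries $m+2$ colors: otherwise, refining indefinitely, the offending faces would accumulate to a point lying in $m+2$ of the closed sets $F_i$, contradicting the multiplicity bound.

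Next I would lift the coloring to a $\Lambda$-periodic coloring of $\mathbb R^d$, restrict it to a cube $Q'$ of side $\sim N$ (so $\sim N^d$ unit small cubes, with the pulled-back triangulation and the no-$(m+2)$-colors property inherited), and apply Theorem~\ref{color-cube} together with Remark~\ref{color-comp}. This produces a connected monochromatic set $S\subseteq Q'$ with at least $f(d,m)(\operatorname{const}\cdot N)^{d-m}$ vertices, of some color $i$. Then $S$ lies in a single connected component $C'$ of the preimage in $\mathbb R^d$ of one connected component $D$ of $C_i$. Set $\Lambda_{C'}=\{\lambda\in\Lambda:\lambda+C'=C'\}$. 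Standard covering-space theory identifies $\Lambda_{C'}$ with the image of $\pi_1(D)$ in $\pi_1(T^d)=\Lambda$, which — the target being abelian — coincides with the image of $H_1(D)\to H_1(T^d)$ and is therefore contained in the image of $H_1(C_i)\to H_1(T^d)$. Let $\ell=\operatorname{rank}\Lambda_{C'}$, so $\ell$ is at most the dimension of the image of $H_1(C_i)$ in $H_1(T^d)$.

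The crux is the estimate $|C'\cap Q'|\le\operatorname{const}\cdot N^{\ell}$. This rests on the geometric fact (the torus analogue of ``$C'$ is trapped between parallel hyperplanes'' in the proof of Corollary~\ref{touch-opposite}) that $C'$ lies in a bounded neighborhood of the affine $\ell$-plane $\mathbb R\Lambda_{C'}+v_0$. I would prove this by passing to the intermediate cover $\mathbb R^d/\Lambda_{C'}\cong(\mathbb R/\mathbb Z)^{\ell}\times\mathbb R^{d-\ell}$, in which $C'/\Lambda_{C'}$ maps homeomorphically onto $D$, and showing that this homeomorphic copy of $D$ cannot escape to infinity in the $\mathbb R^{d-\ell}$-directions: covering the compact set $\overline D$ by finitely many small evenly-covered open sets, an escaping sequence would force a point of $\overline D$ to be met by infinitely many sheets, hence (after the routine reduction to covers $\{C_i\}$ for which each $C_i\cap(\text{small ball})$ has finitely many components) infinitely many components of some $U\cap D$ — a contradiction. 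Reconstructing this transverse boundedness, for which the torus (unlike the cube) offers no reflection trick, is the main obstacle; the rest is bookkeeping.

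Finally, combining $|S|\ge f(d,m)(\operatorname{const}\cdot N)^{d-m}$ with $|S|\le|C'\cap Q'|\le\operatorname{const}\cdot N^{\ell}$ and letting $N\to\infty$ forces $\ell\ge d-m$. Since there are only finitely many indices $i$, a pigeonhole over $N$ singles out one $C_i$ for which the image of $H_1(C_i)$ in $H_1(T^d)=\mathbb Z^d$ has dimension at least $d-m$, as claimed.
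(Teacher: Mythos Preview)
Your overall plan (discretize, lift periodically to $\mathbb R^d$, apply Theorem~\ref{color-cube} in a cube of side $\sim N$, then extract a rank bound) coincides with the paper's. The difference is in the last step. The paper never touches the continuous components $D\subseteq C_i$ or their stabilizers: instead it takes the monochromatic component $S$ inside the big cube, picks the residue class modulo the fundamental cube on which $S$ is densest, calls that subset $S_1$ (so $|S_1|\ge |S|/n^d$), and observes that any path in $S$ between two points of $S_1$ projects to a closed monochromatic loop in $T^d$ whose homology class is the difference vector divided by $n$. Thus the image of $H_1(C_i)$ contains the lattice spanned by the pairwise differences of $S_1$. If the affine hull of $S_1$ had dimension $\le d-m-1$, a straight lattice-point count would give $|S_1|\le C\,N^{d-m-1}$, contradicting $|S_1|\ge c\,N^{d-m}$ for large $N$. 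This is purely combinatorial and needs no transverse-boundedness statement.

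Your route via the stabilizer $\Lambda_{C'}$ and the estimate $|C'\cap Q'|\le \text{const}\cdot N^{\ell}$ is morally right, but as written it has a genuine gap. The reduction ``to covers $\{C_i\}$ for which each $C_i\cap(\text{small ball})$ has finitely many components'' is not routine for arbitrary open covers; and even granting transverse boundedness of each individual $C'$, the thickness constant depends on which component $C'$ you land in, while $C'$ varies with $N$ --- so the inequality $cN^{d-m}\le \text{const}\cdot N^{\ell}$ cannot be pushed to $N\to\infty$ without an extra uniformity argument that your pigeonhole over the finitely many indices $i$ does not supply (a single $C_i$ may have infinitely many components). The cleanest fix is to replace the continuous component $C'$ by the \emph{graph} component of the lifted color-$i$ vertices containing $S$: there are only finitely many such components up to $\Lambda$-translation (since the vertex set of $T^d$ is finite), so the thickness is uniformly bounded; the stabilizer of this graph component still injects into the image of $H_1(C_i)$ because, with mesh small enough, every monochromatic edge lies inside $C_i$. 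That repaired version is essentially equivalent to the paper's residue-class argument, which reaches the same conclusion with less machinery.
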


\begin{remark}
The sets have to be open so that the connectedness and the arcwise connectedness coincide.
\end{remark}

\begin{proof}
As in the previous proof we pass from the covering of $T^d$ to a fine enough triangulation of a covering cube $Q$, which subdivides the cubical partition into $n^d$ small cubes. Then we assume that the vertices of the triangulation are colored so that no face has more than $m+1$ colors. Duplicating $Q$ by translations we obtain a large cube $Q_N$ with side length $Nn$ and the corresponding coloring. By gluing the opposite facets of $Q_N$ we obtain a torus naturally $N^d$-fold covering $T^d$. 

By Theorem~\ref{color-cube} in $Q_N$ we have a monochromatic connected component $S$ with size of order $N^{d-m}$. Let $S_1$ be the maximal intersection of $S$ with a residue class modulo $Q$ (points are equal modulo $Q$ if the differences of their coordinates are divisible by $n$). Then $|S_1| \ge |S|/{n^d} \ge Ñ\frac{N^{d-m}}{n^d}$.

Note that a projection of a monochromatic path in $S$ starting in a point of $S_1$ with coordinates $(x_1, \ldots, x_d)$ and ending in a point of $S_1$ with coordinates $(y_1,\ldots, y_d)$ is a monochromatic closed loop in $T^d$ representing the homology class $(\frac{y_i-x_i}{n})_i$. So it suffices to show that the dimension of the linear space generated by pairwise differences of $S_1$ is at least $d-m$ (the covering set corresponding to $S$ will be the one required). Equivalently, we have to show that the dimension of the affine hull of  $S_1$ is at least $d-m$.

Assume the contrary: The dimension of the affine hull of $S_1$ is at most $d-m-1$. Then $S_1$ is contained in at most $(d-m-1)$-dimensional affine subspace $L$ and the number of vertices in $Q_N\cap L$ (and therefore the number of vertices in $S_1$) is at most $(Nn)^{d-m-1}$. For large enough $N$ we obtain a contradiction with the inequality $|S_1| \ge Ñ\frac{N^{d-m}}{n^d}$.
\end{proof}

\end{document}